\numberwithin{equation}{section}
\numberwithin{figure}{section}
\theoremstyle{plain}
\newtheorem{thm}{\protect\theoremname}
  \theoremstyle{plain}
  \newtheorem{prop}[thm]{\protect\propositionname}
  \theoremstyle{plain}
  \newtheorem{lem}[thm]{\protect\lemmaname}
  \theoremstyle{remark}
  \newtheorem{rem}[thm]{\protect\remarkname}
  \providecommand{\lemmaname}{Lemma}
  \providecommand{\propositionname}{Proposition}
  \providecommand{\remarkname}{Remark}
\providecommand{\theoremname}{Theorem}
\begin{document}

\title{On Improving Roth's Theorem in the Primes}

\author{Eric Naslund}

\date{December 6th, 2013}

\address{Princeton University Mathematics Department, Fine Hall Room 304,
Princeton NJ 08544-1044 }

\email{naslund@math.princeton.edu}
\begin{abstract}
Let $A\subset\left\{ 1,\dots,N\right\} $ be a set of prime numbers
containing no non-trivial arithmetic progressions. Suppose that $A$
has relative density $\alpha=|A|/\pi(N)$, where $\pi(N)$ denotes
the number of primes in the set $\left\{ 1,\dots,N\right\} $. By
modifying Helfgott and De Roton's work \cite{HelfgottRoton}, we improve
their bound and show that 
\[
\alpha\ll\frac{\left(\log\log\log N\right)^{6}}{\log\log N}.
\]

\end{abstract}
\maketitle

\section{Introduction}

In 1936, Erd\"{o}s and Tur\'{a}n \cite{ErdosTuran} conjectured
that if a set $A\subset\mathbb{N}=\left\{ 1,2,3\dots\right\} $ contains
no $k$ term arithmetic progressions, then it cannot be {}``too large.''
We say that $A\subset\mathbb{N}$ has positive (upper) density if
for some $\epsilon>0$ 
\[
\limsup_{x\rightarrow\infty}\frac{1}{x}\sum_{n\leq x}1_{A}(x)\geq\epsilon,
\]
and throughout this section we will exclude those trivial arithmetic
progressions whose difference is $0$. In 1953, Roth \cite{RothTheorem1953}
proved that if a set $A\subset\mathbb{N}=\left\{ 1,2,3\dots\right\} $
contains no non-trivial arithmetic progressions, then $A$ has density
$0$. Quantitatively he showed that any progression free set of integers
$A$ satisfies 
\[
|A\cap\left\{ 1,2,\dots,N\right\} |\ll\frac{N}{\log\log N}.
\]
Roth's Theorem has been improved significantly over the last 60 years
by Heath-Brown, Szemer\'{e}di, Bourgain, \cite{HeathBrown1987,Szemeredi1990roth,Bourgain1999,Bourgain2009}
and most recently Sanders \cite{Sanders2012roth}, who obtained 
\[
|A\cap\left\{ 1,2,\dots,N\right\} |\ll\frac{N\left(\log\log N\right)^{5}}{\log N}.
\]
Moving to the set of prime numbers, which we will denote $\mathcal{P}$,
we define the relative density of a set $A\subset\mathcal{P}$ up
to $N$ to be 
\begin{equation}
\alpha(N)=\frac{|A\cap\left\{ 1,2,\dots,N\right\} |}{|\mathcal{P}\cap\left\{ 1,2,\dots N\right\} |}.\label{eq:relative density}
\end{equation}
In 1939, Van Der Corput \cite{VanDerCorput1939} showed that $\mathcal{P}$
contains infinitely many non-trivial three term arithmetic progressions.
Green \cite{GreenRothinPrimes} proved an analogue of Roth's Theorem
inside the primes, showing that if $A\subset\mathcal{P}$ contains
no non-trivial arithmetic progressions, then 
\[
\alpha(N)\ll\left(\frac{\log\log\log\log\log N}{\log\log\log\log N}\right)^{\frac{1}{2}},
\]
where the notation $f(N)\ll g(N)$ means that there exists an absolute
constant $C$ such that $f(N)\leq Cg(N)$ for all $N\geq1$. Helfgott
and De Roton \cite{HelfgottRoton} improved this density bound, removing
two log's from the denominator to obtain 
\begin{equation}
\alpha(N)\ll\frac{\log\log\log N}{\left(\log\log N\right)^{\frac{1}{3}}}.\label{eq:Helfgott and De Roton density bound}
\end{equation}
Their result implicitly uses the best quantitative bound on Roth's
Theorem in the integers, and when the proof is run through again with
Sander's bound, the density recovered is 
\begin{equation}
\alpha(N)\ll\frac{\left(\log\log\log N\right)^{\frac{5}{2}}}{\left(\log\log N\right)^{\frac{1}{2}}}.\label{eq:Helfgott and De Roton with Sanders}
\end{equation}
Our main result is the following: 
\begin{thm}
\label{thm:Main Theorem}Suppose that $A\subset\mathcal{P}\cap\left[1,N\right]$
has relative density $\alpha$ and contains no non-trivial arithmetic
progressions. Then 
\[
\alpha\ll\frac{\left(\log\log\log N\right)^{6}}{\log\log N}.
\]

\end{thm}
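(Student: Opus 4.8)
The plan is to run the Green--Tao transference machinery in the form used by Helfgott and De~Roton \cite{HelfgottRoton}, but to arrange it so that Sanders' quantitative Roth bound \cite{Sanders2012roth} can be inserted as a black box applied to a genuinely bounded function of density $\gg\alpha$; getting the structured part to density $\gg\alpha$ rather than $\gg\alpha^{2}$ is what upgrades the exponent, and is presumably the point at which the earlier argument (which, run with Sanders' bound, only yields \eqref{eq:Helfgott and De Roton with Sanders}) is lossy.

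First I would set up the $W$-trick. Choose a slowly growing $w=w(N)$, put $W=\prod_{p\le w}p$, and use pigeonhole to find a residue class $b\pmod W$ with $(b,W)=1$ in which $A$ still has relative density $\gg\alpha$ among the primes. Rescaling $n=Wm+b$ with $m\le M:=N/W$ and embedding into $\mathbb{Z}_{N'}$ for a prime $N'\in[2M,4M)$, one builds the Selberg (Goldston--Yildirim type) sieve majorant $\nu$ at level $R=M^{1/10}$, normalized so that $0\le f\le\nu$, $\mathbb{E}\,\nu=1+o(1)$ and $\mathbb{E}\,f\gg\alpha$, where $f$ is the suitably weighted indicator of the rescaled $A$ on $\mathbb{Z}_{N'}$. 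The analytic input is the Green--Tao restriction estimate $\|\widehat f\,\|_{\ell^{q}(\mathbb{Z}_{N'})}\ll_{q}1$ for each fixed $q>2$; I need this, together with the linear-forms and correlation estimates for $\nu$, uniformly in the parameters and for $w$ as large as the optimization below demands.

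Next is the transference and counting step. Fix $q$ slightly bigger than $2$ and a uniformity threshold $\eta$; let $\Gamma=\{r\neq 0:|\widehat f(r)|\ge\eta\}$, so $|\Gamma|\le\eta^{-q}$ by restriction, take a regular Bohr set $B=B(\Gamma,\rho)$ with $\rho$ a small multiple of $\eta/|\Gamma|$, and split $f=f_{1}+f_{2}$ where $f_{1}$ is $f$ convolved with (a small convolution power of) the normalized indicator of $B$. Then $f_{1}\ge 0$, $\mathbb{E}\,f_{1}=\mathbb{E}\,f\gg\alpha$, $\|\widehat{f_{2}}\,\|_{\infty}\ll\eta$, and --- using the pseudorandomness of $\nu$ --- the crucial bound $\|f_{1}\|_{\infty}\le 1+o(1)$ holds; this step forces $\log N'\gg\eta^{-q}\log(1/\rho)$. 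Now count three-term progressions via $\Lambda_{3}(g):=\mathbb{E}_{x,d}\,g(x)g(x+d)g(x+2d)$. Because $A$ has no nontrivial progression, only $d=0$ contributes and $\Lambda_{3}(f)=\tfrac{1}{N'}\mathbb{E}_{x}f(x)^{3}\le\tfrac{1}{N'}\|f\|_{\infty}^{2}\,\mathbb{E}\,f\ll N^{-1+o(1)}$. On the other hand, multilinearity gives $\Lambda_{3}(f)=\Lambda_{3}(f_{1})+(\text{error terms, each with a factor }f_{2})$, and each error term is $\ll\eta^{1-o(1)}$ by H\"older applied to the Fourier expansion together with the restriction estimate (letting $q\downarrow 2$ while keeping the restriction constant under control). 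Since $f_{1}\ge 0$ is bounded by $1+o(1)$ with mean $\gg\alpha$ on $\mathbb{Z}_{N'}$, Sanders' theorem --- upgraded by a Varnavides-type averaging to a lower bound on the count --- gives $\Lambda_{3}(f_{1})\gg\exp\bigl(-C\alpha^{-1}(\log\alpha^{-1})^{5}\bigr)$ as soon as $N'\ge\exp\bigl(C\alpha^{-1}(\log\alpha^{-1})^{5}\bigr)$. Comparing the two evaluations, the $N^{-1+o(1)}$ term is negligible, so $\eta^{1-o(1)}\gg\exp\bigl(-C\alpha^{-1}(\log\alpha^{-1})^{5}\bigr)$.

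Finally, I would take $\eta$ as small as the transference allows, $\eta\asymp(\log N)^{-1/q}$ coming from $\log N'\gg\eta^{-q}\log(1/\rho)$, and combine with the last inequality: this forces a Diophantine inequality of the shape $\log\log N\ll\alpha^{-1}(\log\alpha^{-1})^{5}$, and solving it (bootstrapping via $\log\alpha^{-1}\le(1+o(1))\log\log\log N$, and absorbing the various lower-order losses into one further power of $\log\log\log N$) yields $\alpha\ll(\log\log\log N)^{6}/\log\log N$. I expect the main obstacle to be the point flagged at the outset: securing $\|f_{1}\|_{\infty}\le 1+o(1)$ for a structured part of mean $\gg\alpha$, so that Sanders enters at ``density $\alpha$'' rather than ``density $\alpha^{2}$'', \emph{while} simultaneously pushing the uniformity scale $\eta$ down to $\approx(\log N)^{-1/q}$ --- this requires the restriction and pseudorandomness estimates for $\nu$ to be both strong and flexible (uniform as $q\downarrow 2$ and over the needed range of $w$). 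The double-exponential dependence of $N$ on $\alpha^{-1}$ that the scheme forces --- restriction makes $\eta$ exponentially small in the Sanders threshold, and the Bohr set makes $N'$ exponentially large in $\eta^{-q}$ --- is intrinsic to transference and is what keeps the final bound at the $1/\log\log N$ level rather than the $1/\log N$ level available in the integers.
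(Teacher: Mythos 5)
Your overall architecture (W-trick, sieve majorant, Bohr-set decomposition, error terms $\ll\eta$ via restriction, Sanders as a black box, and the final optimisation giving $\log\log N\ll\alpha^{-1}(\log\alpha^{-1})^{5}$ up to $\log\log\log N$ factors) parallels the paper, and you correctly identify where the gain over \eqref{eq:Helfgott and De Roton with Sanders} must come from: Sanders has to be applied at density essentially $\alpha$ rather than $\alpha^{2}$. But the step you rely on to achieve this --- the ``crucial bound'' $\|f_{1}\|_{\infty}\le1+o(1)$ for the Bohr-convolved structured part, at uniformity scale $\eta\approx(\log N)^{-1/q}$ --- is a genuine gap, and you give no mechanism for it. Pointwise control of $f*\beta_{B}$ (equivalently of $\nu*\beta_{B}$) at a point $x$ is an upper bound \emph{with constant} $1+o(1)$ for the weighted count of primes, resp.\ sieve weight, in the translate $x-B$ of a Bohr set whose size is forced down to about $N^{1/2}$ by the constraint $\log N\gg\eta^{-q}\log(1/\rho)$. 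No upper-bound sieve gives constant $1+o(1)$ in such sparse sets (Brun--Titchmarsh already loses a factor $2$, and the paper's own use of it in \eqref{eq:basic sieve bound} loses a factor $12$), and asymptotics for primes in Bohr sets of size $N^{1/2}$ are far out of reach. The alternative, an abstract dense-model theorem producing a bounded model, requires the Fourier pseudorandomness of $\nu$ to be exponentially small in a power of $\eta^{-1}$, i.e.\ much smaller than any $(\log N)^{-A}$, whereas the $W$-tricked majorant only achieves logarithmic-type savings. So the central claim of your transference step is not available; this is precisely the obstruction that forces Green, Helfgott--De Roton, and this paper to work with $L^{p}$ rather than $L^{\infty}$ information about the convolution.

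The paper's substitute for your $L^{\infty}$ bound is its actual new content: Proposition \ref{prop: Key Sieve Lemma} proves $\|a*\sigma\|_{2k}\ll k$ for $k$ as large as $\tfrac12(\log z)^{1/3}$, using Klimov's version of the Selberg sieve (Theorem \ref{thm: Klimov's Theorem}) because the dependence on the number of linear forms must be explicit (the $3^{2k}(2k)!$ factor), together with a combinatorial treatment of $2k$-tuples with repeated coordinates. Then Lemma \ref{lem:Extracting a dense set lemma} (H\"older with exponent $q_{2k}=(1-\tfrac{1}{2k})^{-1}$) extracts a level set $L$ with $h\ge\alpha/20$ of density $\gg(\alpha/k)^{q_{2k}}$; choosing $2k=[\log\log\log N]$ makes $q_{2k}$ close enough to $1$ that Sanders enters at density essentially $\alpha$ up to a factor $k$, which is exactly where the exponent $6=1+5$ comes from. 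Without either this $L^{2k}$ mechanism or a genuinely new argument for your $\|f_{1}\|_{\infty}\le1+o(1)$ claim, your proposal does not close; as written it would revert to an $L^{2}$-type bound and hence only to \eqref{eq:Helfgott and De Roton with Sanders}.
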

Our proof parallels that of Helfgott and De Roton, and we look at
the convolution of the indicator function of the set of primes and
the indicator function of a set $\Sigma$. We gain a factor of two
in the exponent by using the $L^{2k}$ norm, where $k$ is a slowly
growing function of $N$, rather than the $L^{2}$ norm. Using this
higher norm introduces several combinatorial difficulties which are
dealt with in section \ref{sec:Sieving-the-Primes} and in the proof
of proposition \ref{prop: Key Sieve Lemma}. This $L^{2k}$ norm bound
gives greater control over the outliers, and allows us to choose a
larger subset on which the convolution is uniformly bounded from below.
As in Helfgott and De Roton, the bound on Roth's Theorem yields a
lower bound on the size of the three term progression operator applied
to this uniform set. If the set $\Sigma$ is chosen correctly, the
three term progression operator of the convolution cannot be too far
from that of the indicator function of the primes, which gives the
desired density bound for the primes.

\subsection{Preliminaries and Notation}

For two functions $f,g:\mathbb{N}\rightarrow\mathbb{R}$, we write
$f\ll g$, or $f(x)=O(g(x))$ if there exists a constant $C>0$ such
that $|f(n)|\leq Cg(n)$ for all positive integers $n$. Often we
will look at when $f\ll g$ for sufficiently large $n$, which means
that there exists $N_{0},C>0$ with $|f(n)|\leq Cg(n)$ for all $n\geq N_{0}$. 

To denote $\frac{1}{|S|}\sum_{x\in S}f(x)$, the expectation of $f$
over the set $S$, we write $\mathbb{E}_{x\in S}f(x)$. Given a function
$f:\mathbb{Z}/N\mathbb{Z}\rightarrow\mathbb{C}$, where $N$ is a
prime, we define the Fourier transform to be 
\[
\hat{f}(t)=\mathbb{E}_{x\in\mathbb{Z}/N\mathbb{Z}}f(x)e^{2\pi ixt/N}.
\]
The convolution operation is given by
\begin{equation}
\left(f*g\right)(x)=\mathbb{E}_{y\in\mathbb{Z}/N\mathbb{Z}}f(y)g(x-y),\label{eq:Convo def}
\end{equation}
which is suitably normalized so that 
\begin{equation}
\widehat{f*g}(t)=\hat{f}(t)\hat{g}(t).\label{eq:diagonlized fourier transform}
\end{equation}
The $L^{k}$ and $\ell^{k}$ norms are defined to be 
\[
\|f\|_{L^{k}(\mathbb{Z}/N\mathbb{Z})}=\left(\mathbb{E}_{x\in\mathbb{Z}/N\mathbb{Z}}|f(x)|^{k}\right)^{\frac{1}{k}},
\]
and 
\[
\|\hat{f}\|_{\ell^{k}(\mathbb{Z}/N\mathbb{Z})}=\left(\sum_{x\in\mathbb{Z}/N\mathbb{Z}}|\hat{f}(x)|^{k}\right)^{\frac{1}{k}}.
\]
When there is no ambiguity, we will omit the notation $\ell^{k}(\mathbb{Z}/N\mathbb{Z})$
and $L^{k}(\mathbb{Z}/N\mathbb{Z})$, and simply write $\|\cdot\|_{k}.$
We will make use of the fact that the inner product $\langle f,g\rangle_{L^{2}(\mathbb{Z}/N\mathbb{Z})}=\mathbb{E}_{x\in\mathbb{Z}/N\mathbb{Z}}f(x)\overline{g(x)},$
satisfies Plancherel's identity
\begin{equation}
\mathbb{E}_{x\in\mathbb{Z}/N\mathbb{Z}}f(x)\overline{g(x)}=\sum_{t\in\mathbb{Z}/N\mathbb{Z}}\widehat{f}(t)\overline{\hat{g}(t)},\label{eq: plancherel}
\end{equation}
from which we obtain $\|f\|_{L^{2}(G)}=\|\hat{f}\|_{\ell^{2}(G)}.$
Given functions $f,g,h:\mathbb{Z}/N\mathbb{Z}\rightarrow\mathbb{C}$,
we let $\Lambda(f,g,h)$ denote the three term arithmetic progression
operator defined by 
\[
\Lambda\left(f,g,h\right)=\mathbb{E}_{x,d\in\mathbb{Z}/N\mathbb{Z}}f(x)g(x+d)h(x+2d).
\]
 If $1_{A}$ is the indicator function of a set $A\subset\mathbb{Z}/N\mathbb{Z}$,
then $\Lambda\left(1_{A},1_{A},1_{A}\right)$ counts the total number
of three term progressions in $A$, including the trivial progressions.
For a set $\Sigma\subset\mathbb{Z}/N\mathbb{Z}$, we let $|\Sigma|$
denote the cardinality of $\Sigma$, and $\mu(\Sigma)=\frac{|\Sigma|}{N}$
denote the relative measure.

\section{Sieving the Primes\label{sec:Sieving-the-Primes}}

Let $A\subset[1,N]$ be a subset of the primes with $|A|=\alpha\frac{N}{\log N}$,
and suppose that $\alpha\geq\left(\log N\right)^{-\frac{1}{4}}$.
We will remove the small primes using the {}``$W$ trick,'' which
allows us to effectively apply certain sieve results later on. Let
$W=\prod_{p\leq z}p$ be the product of the primes less than $z$.
Splitting into the different arithmetic progressions modulo $W$,
there will be exactly $\phi(W)$ nontrivial residue classes. By the
pigeon hole principle there exists an arithmetic progression $AP(b)=\left\{ b+nW\ :\ 1\leq n\leq\frac{N}{W}\right\} $
with

\begin{equation}
\left|AP(b)\cap A\right|\geq\alpha\frac{N}{\log N}\frac{1}{\phi(W)}-\frac{W}{\phi(W)}\label{eq: Pigeon hole lower bound arithmetic progression}
\end{equation}
where the $W/\phi(W)$ on the right hand side appears since we are
not including the primes up to $W$. Let $P$ be the least prime larger
than $\frac{3N}{W}$, so that $3\frac{N}{W}<P\leq6\frac{N}{W}$, and
let $A_{0}\subset\left[1,P\right]$ be the set 
\[
A_{0}=\left\{ n=\frac{m-b}{W}:\ m\in AP(b)\cap A\right\} ,
\]
noting that an arithmetic progression in $A_{0}$ can be lifted to
a progression in $A$. Using equation (\ref{eq: Pigeon hole lower bound arithmetic progression})
along with some basic asymptotics for the number of primes, we can
find a lower bound for the size of $A_{0}$. Notice that 
\begin{equation}
\log W=\sum_{p\leq z}\log p=\theta(z)\sim z,\label{eq:theta asymp to z}
\end{equation}
so that $W\approx e^{z}$, and 
\[
\frac{W}{\phi(W)}=\prod_{p\leq z}\left(1-\frac{1}{p}\right)^{-1}\sim e^{\gamma}\log z
\]
where $\gamma=\lim_{x\rightarrow\infty}\left(\sum_{n\leq x}\frac{1}{n}-\log x\right)\approx0.577$
is the Euler-Mascheroni constant. Choosing $z=\frac{1}{4}\log N$,
and $N$ sufficiently large, we may assume that 
\[
\frac{4}{5}z\leq\log W\leq\frac{4}{3}z,
\]
which means that the modulus has size $N^{\frac{1}{5}}\leq W\leq N^{\frac{1}{3}},$
and 
\begin{equation}
\log z\leq\frac{W}{\phi(W)}\leq2\log z.\label{eq:W/phi(W) bound}
\end{equation}
Equation (\ref{eq:W/phi(W) bound}) along with the inequality $\frac{P}{6}\leq\frac{N}{W}$
implies that and $\frac{P\log z}{6}\leq\frac{N}{\phi(W)}$, and so
by (\ref{eq: Pigeon hole lower bound arithmetic progression}) we
have that 
\begin{eqnarray*}
|A_{0}| & \geq & \alpha P\frac{\log z}{6\log N}-2\log z\\
 & \geq & \alpha P\frac{\log z}{10\log N}
\end{eqnarray*}
for $N$ sufficiently large. Each arithmetic progression inside $A_{0}$
corresponds to an arithmetic progression in $A$, and so if $A_{0}$
contains a three term arithmetic progression, then $A$ must as well.
With this in mind, we shift our attention to the progressions inside
$A_{0}$. Define $a=\frac{\log N}{\log z}1_{A_{0}}$ to be the normalized
indicator function for the set $A_{0}$, which is supported on $\left[0,\frac{P}{3}\right]$
since we chose $P>\frac{3N}{W}$. This function satisfies 
\begin{equation}
\|a\|_{1}=\mathbb{E}_{n\in\mathbb{Z}/P\mathbb{Z}}a(n)\geq\frac{\alpha}{10},\label{eq:L^1 norm bound on a}
\end{equation}
and 
\[
\|a\|_{2}^{2}=\mathbb{E}_{n\in\mathbb{Z}/P\mathbb{Z}}a(n)^{2}=\frac{\log N}{\log z}\mathbb{E}_{n\in\mathbb{Z}/P\mathbb{Z}}a(n)=\frac{\log N}{\log z}\|a\|_{1}.
\]
In section \ref{sec:Main-Theorem} we will examine the key quantity
$\Lambda(a,a,a)$ in detail, and show that it cannot be too small
when $\alpha$ is large. To do this, we will need a bound on the $L^{2k}$
norm of the convolution of $a$ and an indicator function $1_{\Sigma}$,
which is discussed in the following section.

\subsection{Bounding the $L^{2k}$ norm of the convolution}

Our goal is to provide bounds on the $L^{2k}$ norm of $a*\sigma$
where $\sigma$ is the indicator function of a set $\Sigma$. 
\begin{prop}
\label{prop: Key Sieve Lemma}Given $z,N,P$ and $a(n)$ as above,let
$k$ be an integer in the range $1\leq k\leq\frac{1}{2}\log^{\frac{1}{3}}z$.
Suppose that $\sigma=\frac{1_{\Sigma}}{\mu(\Sigma)}$ is the normalized
indicator function of a set $\Sigma\subset\left[-\frac{P}{3},\frac{P}{3}\right]\subset\mathbb{Z}/P\mathbb{Z}$.
Then for $N$ greater than some fixed $N_{0}$, we have the bound
\[
\|a*\sigma\|_{2k}\ll k+\left(\frac{\log N}{\log z}\right)^{1-\frac{1}{2k}}|\Sigma|^{-\frac{1}{2k}}.
\]

\end{prop}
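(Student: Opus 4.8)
The plan is to bound $\|a*\sigma\|_{2k}^{2k} = \mathbb{E}_{x}|(a*\sigma)(x)|^{2k}$ by expanding the $2k$-th power and counting solutions to a system of linear equations weighted by $a$. Writing $a = \frac{\log N}{\log z}1_{A_0}$, we have
\[
(a*\sigma)(x) = \mathbb{E}_{y}a(y)\sigma(x-y) = \frac{1}{\mu(\Sigma)}\cdot\frac{\log N}{\log z}\cdot\frac{1}{P}\sum_{y\in A_0}1_\Sigma(x-y),
\]
so $(a*\sigma)(x)$ counts, up to normalization, the number of ways to write $x = y + s$ with $y\in A_0$ and $s\in\Sigma$. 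Raising to the $2k$-th power and averaging over $x$, the quantity $\|a*\sigma\|_{2k}^{2k}$ becomes a normalized count of the number of solutions to
\[
y_1 + s_1 = y_2 + s_2 = \cdots = y_{2k} + s_{2k}
\]
with all $y_i\in A_0$ and all $s_i\in\Sigma$; equivalently, fixing differences, a count of $(2k-1)$-tuples of differences $s_i - s_j$ realized by pairs of elements of $A_0$. The first step is to set up this combinatorial reformulation cleanly and peel off the normalizing constants, so that one reduces to estimating $\sum_{n} r(n)$ where $r(n) = \#\{(a,b)\in A_0\times A_0 : a - b = n\}$ summed against a convolution power of $1_\Sigma$.

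The heart of the matter is a sieve estimate: $A_0$ is (a lifted, $W$-tricked version of) a set of primes in an interval of length roughly $P$, with density about $\frac{\log z}{\log N}$, and the key input is that the number of pairs of elements of $A_0$ differing by a fixed nonzero $n$ is $O\!\left(\prod_{p\mid n, p>z}(1 + \tfrac1p) \cdot \frac{P\log z}{\log^2 N}\right)$ — the standard upper bound sieve (Selberg / Brun) for $\{p, p+n\}$ simultaneously almost-prime, with the $W$-trick having already removed the small primes so the singular series is $\ll \prod_{p\mid n}(1+O(1/p))$. One then has to control, after opening the $2k$-fold product, sums of the shape $\sum \prod_{p \mid (s_i - s_j)}(1 + 1/p)$ over configurations of points $s_1,\dots,s_{2k}\in\Sigma$; since $\sum_{p>z}1/p$ is small, these Euler-product weights contribute only a bounded multiplicative factor uniformly in the range $k\le \frac12\log^{1/3}z$ — this is exactly where the upper bound on $k$ is used, and keeping track of it uniformly in $k$ is the main obstacle, since a naive bound would lose a factor growing with $k$. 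The diagonal contribution (where many $s_i$ coincide, forcing $y_i$ to coincide as well) must be separated out; it is responsible for the additive "$k$" term in the statement, essentially via a bound like $k!\,\mu(\Sigma)^{-k}\cdot(\text{main density})^{k}$ or a cruder counting of degenerate configurations, whereas the off-diagonal "generic" configurations give the term $\left(\frac{\log N}{\log z}\right)^{2k-1}|\Sigma|^{-1}$ after renormalizing by $|\Sigma|^{2k}$.

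Concretely I would proceed as follows. First, reduce to bounding $\mathbb{E}_x |(1_{A_0}*1_\Sigma)(x)|^{2k}$ and expand it as a sum over $(s_1,\dots,s_{2k})\in\Sigma^{2k}$ of $\#\{x : x - s_i \in A_0 \ \forall i\}$. Second, split the outer sum according to the partition of $\{1,\dots,2k\}$ induced by equality of the $s_i$: for a partition into $j$ distinct values, the inner count is at most $\#\{x : x - t_1\in A_0,\ x - t_2 \in A_0\}$ for two of the distinct values $t_1\ne t_2$ (or just $|A_0|$ if $j=1$), which the sieve bounds by $\ll \frac{P\log z}{\log^2 N}\prod_{p\mid t_1-t_2, p>z}(1+\tfrac1p)$. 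Third, sum the sieve weight over $\Sigma$: $\sum_{t_1,t_2\in\Sigma}\prod_{p\mid t_1-t_2, p>z}(1+\tfrac1p) \ll |\Sigma|^2 \exp\!\big(\sum_{p>z}\tfrac1p\big) \ll |\Sigma|^2$, and more generally with $j-2$ free extra points one gains $|\Sigma|^{j-2}$ and a factor bounded by a constant to the power $j\le 2k$, absorbed since $k$ is small. Fourth, bound the number of partitions of $\{1,\dots,2k\}$ into $j$ parts by $(2k)^{2k-j}/(2k-j)!$ or similar, and check that summing over $j$ the dominant terms are $j=1$ (giving $|A_0| = \mu(A_0)P \approx \frac{\log z}{\log N}P$, contributing the "$k$"-type term after the $2k$-th root) and $j=2k$ (the fully off-diagonal term, contributing $\left(\frac{\log N}{\log z}\right)^{1-1/2k}|\Sigma|^{-1/2k}$ after renormalizing and taking the root); intermediate $j$ interpolate and are dominated by the max of the two. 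Finally, take $2k$-th roots and use $(u+v)^{1/2k}\le u^{1/2k}+v^{1/2k}$ to split into the two displayed terms. The uniformity in $k$ — ensuring that all the $C^k$, $k!$, $(2k)^{2k}$ type losses are swamped by the main terms precisely when $k\le\frac12\log^{1/3}z$ — is where the bookkeeping must be done with care, and is the step I expect to be the genuine obstacle.
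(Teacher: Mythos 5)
Your overall framework (expand the $2k$-th moment over tuples in $\Sigma^{2k}$, split according to the number $j$ of distinct coordinates, bound the inner count of simultaneous memberships in $A_0$ by a sieve, and finish with $(u+v)^{1/2k}\le u^{1/2k}+v^{1/2k}$) is the same as the paper's, and your treatment of the degenerate tuples would go through. But there is a genuine gap at the crucial step: your only sieve input is two-dimensional (a pair-correlation bound $\#\{x:\ x-t_1,\,x-t_2\in A_0\}\ll \frac{P(\log z)^{2}}{\log^{2}N}\prod_{p\mid t_1-t_2,\ p>z}(1+1/p)$), and in the fully distinct case $j=2k$ you discard all but two of the $2k$ primality conditions. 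That term carries no $|\Sigma|$-saving at all, since the $\le|\Sigma|^{2k}$ choices of tuple exactly cancel the normalization $\mu(\Sigma)^{-2k}$, and the pair sieve only saves two factors of $\frac{\log z}{\log N}$ against the weight $\left(\frac{\log N}{\log z}\right)^{2k}$; so your $j=2k$ contribution is of size $\left(\frac{\log N}{\log z}\right)^{2k-2}$, i.e.\ $\left(\frac{\log N}{\log z}\right)^{1-\frac{1}{k}}$ after the $2k$-th root. In the regime where the proposition is used ($\Sigma$ a Bohr set with $|\Sigma|\ge\left(\frac{\log N}{\log z}\right)^{2k}$, so the stated bound reads $\|a*\sigma\|_{2k}\ll k$) this is far larger than $k+\left(\frac{\log N}{\log z}\right)^{1-\frac{1}{2k}}|\Sigma|^{-\frac{1}{2k}}$, so the argument as proposed does not prove the proposition. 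You also have the two sources attributed the wrong way around: the degenerate tuples $j<2k$ are what produce the term $\left(\frac{\log N}{\log z}\right)^{1-\frac{1}{2k}}|\Sigma|^{-\frac{1}{2k}}$ (they lose factors of $\frac{\log N}{\log z}$ but gain $|\Sigma|^{j-2k}\le|\Sigma|^{-1}$; the paper even gets away with Brun--Titchmarsh, i.e.\ a one-condition bound, there), while the fully distinct tuples must produce the additive $k$.

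The missing idea is a $2k$-dimensional upper-bound sieve with explicit dependence on the dimension: for pairwise distinct shifts one needs $\#\{n\le P:\ b+(n-t_i)W\ \text{all prime}\}\ll 3^{2k}(2k)!\,P\left(\frac{\log z}{\log N}\right)^{2k}$ up to the singular series, so that every one of the $2k$ conditions yields a density saving and the combinatorial loss is only $(Ck)^{2k}$, whose $2k$-th root is $\ll k$. This is exactly why the paper invokes Klimov's theorem rather than the usual Halberstam--Richert statement: the constant's dependence on the number of linear forms is explicit. The paper then has to control the $2k$-dimensional singular series $\prod_p(1-\rho(p)/p)(1-1/p)^{-2k}$, and it is in bounding the product over the at most $(2k)^{2}\frac{\log P}{\log z}$ primes $p>z$ dividing some difference $t_i-t_j$ (via $8k^{3}\frac{\log P}{\log z}\le 4z$) that the hypothesis $k\le\frac{1}{2}(\log z)^{1/3}$ is genuinely used; in your pairwise setup that hypothesis is hardly needed, which is a symptom of the missing input. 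Two smaller points: the pair sieve bound should carry $(\log z)^{2}$ (one factor of $\frac{W}{\phi(W)}\approx\log z$ per form), and the reason the Euler weight $\prod_{p\mid n,\ p>z}(1+1/p)$ is $O(1)$ is that a nonzero $n\le P$ has at most $\frac{\log P}{\log z}$ prime factors exceeding $z$, each contributing at most $1/z$, not an appeal to $\exp\bigl(\sum_{p>z}1/p\bigr)$, which diverges.
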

We will make use of a theorem of Klimov on the Selberg sieve. Theorem
3 of \cite{Klimov1958SelbergExplicit} states 
\begin{thm}
\label{thm: Klimov's Theorem}(Klimov) Let $1\leq i\leq k$, $1\leq n\leq M$,
$v_{0}\leq v\leq\frac{\sqrt{M}}{\log^{2k}M}$, for a fixed $v_{0}$,
and define $M_{v}\left(q_{i},l_{i}\right)$ to be the number of integers
$n$ for which $p\nmid q_{i}n+l_{i}$, for each $p\leq v$, and each
$1\leq i\leq k$. Then if $u_{0}=O\left(\exp\left(\log^{B}v\right)\right)$
for a fixed constant $B>0$, for $k\geq2$, we have 
\[
M_{v}\left(q_{i},l_{i}\right)\leq\frac{M}{\log^{k}v}k!\prod_{p}\left(1-\frac{\rho(p)}{p}\right)\left(1-\frac{1}{p}\right)^{-k}\left(1+O\left(\frac{\log\log u_{0}}{\log v}\right)\right),
\]
where $u_{0}=\max\left(q_{i},u_{i,j}\right),$ $u_{i,j}=|l_{i}q_{j}-l_{j}q_{i}|$
$\left(1\leq j\leq k\right)$, and $\rho(p)$ is the number of $n\in\left\{ 1,\dots,p\right\} $
such that 
\[
\prod_{i=1}^{k}\left(q_{i}n+l_{i}\right)\equiv0\ \text{mod}\ p.
\]

\end{thm}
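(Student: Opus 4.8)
The plan is to estimate $\|a*\sigma\|_{2k}^{2k}=\mathbb{E}_{x\in\mathbb{Z}/P\mathbb{Z}}(a*\sigma)(x)^{2k}$ directly. Writing $Q=\frac{\log N}{\log z}$, one has $(a*\sigma)(x)=\frac{Q}{|\Sigma|}\sum_{s\in\Sigma}1_{A_0}(x-s)$, so expanding the $2k$th power,
\[
\|a*\sigma\|_{2k}^{2k}=\frac{Q^{2k}}{|\Sigma|^{2k}}\sum_{(s_1,\dots,s_{2k})\in\Sigma^{2k}}T(\vec s),\qquad T(\vec s)=\frac1P\bigl|\{x:\ x-s_i\in A_0\text{ for all }i\}\bigr|.
\]
As $n\in A_0$ iff $b+nW$ is prime, $T(\vec s)$ counts those $x$ for which the $j$ linear forms $Wx+(b-t_rW)$, $r=1,\dots,j$, are simultaneously prime, where $t_1,\dots,t_j$ are the distinct values among $s_1,\dots,s_{2k}$; lifting to $\mathbb{Z}$, these $x$ lie in an interval of length $M\le P/3$ (the intersection of the shifts $s_i+[0,P/3]$, with any modular wrap‑around splitting it into $O(1)$ pieces). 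The argument then splits into a sieve bound for $T(\vec s)$ when $M$ is large, a trivial bound when $M$ is small, and the combinatorial summation over $\vec s$.

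For $M\ge N^{1/3}$ I apply Klimov's theorem (Theorem~\ref{thm: Klimov's Theorem}) to these forms with sieve level $v=\sqrt M/\log^{2j}M$ ($j=1$ being covered by Brun--Titchmarsh). Since $\gcd(b,W)=1$, every form is $\equiv b\not\equiv0\pmod p$ when $p\le z$, so $\rho(p)=0$ there and the singular series factors as $\bigl(\tfrac{W}{\phi(W)}\bigr)^{j}\prod_{p>z}\bigl(1-\tfrac{\rho(p)}{p}\bigr)\bigl(1-\tfrac1p\bigr)^{-j}$ with $\rho(p)\le j$ for $p>z$. Here the hypothesis $j\le 2k\le(\log z)^{1/3}$ is essential: the ``good'' primes $p>z$ (dividing no $t_r-t_{r'}$) each give a factor $\le e^{j/p^2}$, whose product over $p>z$ is $1+o(1)$; the ``bad'' primes $p>z$ (dividing some nonzero $t_r-t_{r'}$, of size $<P<N$) number at most $\binom j2\frac{\log P}{\log z}$ and give factors $\le e^{2j/p}\le e^{2j/z}$, whose product is $\le e^{O(j^3\log P/(z\log z))}=O(1)$. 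Hence the tail product is $O(1)$ uniformly, so the singular series is $\ll(\log z)^{j}$. The remaining hypotheses of Klimov's theorem hold uniformly in $\vec s$: $u_0\le N^{O(1)}$ gives $\log\log u_0\ll\log\log N$ and, with $B=2$, $u_0=O(\exp(\log^{B}v))$; for $M\ge N^{1/3}$ one has $\log v\asymp\log N$, so the factor $1+O(\log\log u_0/\log v)$ is $1+o(1)$. This yields $T(\vec s)\ll j!\,(C/Q)^{j}$ for an absolute constant $C$. When $M<N^{1/3}$ one has trivially $T(\vec s)\le M/P<N^{-1/3+o(1)}$, so these tuples contribute at most $Q^{2k}N^{-1/3+o(1)}=o(1)$ to $\|a*\sigma\|_{2k}^{2k}$, negligible next to $(k+D)^{2k}\ge1$.

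The number of $\vec s\in\Sigma^{2k}$ with exactly $j$ distinct values is at most $S(2k,j)\,|\Sigma|^{j}$ ($S$ the Stirling number of the second kind), so with $L:=Q/|\Sigma|$,
\[
\|a*\sigma\|_{2k}^{2k}\ \ll\ \sum_{j=1}^{2k}j!\,S(2k,j)\,C^{j}L^{2k-j}+o(1).
\]
To handle the sum I use the Fubini‑polynomial identity $\sum_{j}j!\,S(n,j)x^{j}=(1+x)^{-1}\sum_{m\ge0}m^{n}\bigl(\tfrac{x}{1+x}\bigr)^{m}$ together with $\sum_{m\ge0}m^{2k}\rho^{m}=\rho\,A_{2k}(\rho)(1-\rho)^{-(2k+1)}\le\rho\bigl(1+\rho\,(2k)!\bigr)(1-\rho)^{-(2k+1)}$, valid because the Eulerian polynomial $A_{2k}$ has nonnegative coefficients with $A_{2k}(0)=1$, $A_{2k}(1)=(2k)!$. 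Setting $\rho=C/(L+C)$ the sum telescopes to
\[
\sum_{j=1}^{2k}j!\,S(2k,j)\,C^{j}L^{2k-j}\ \le\ C(L+C)^{2k-1}+(2k)!\,C^{2}(L+C)^{2k-2}.
\]
Now write $D:=Q^{1-\frac1{2k}}|\Sigma|^{-\frac1{2k}}$, so $D^{2k}=Q^{2k-2}L$. Using $L\le Q$ (hence $L^{2k-1}\le D^{2k}$, and $L^{2k-2}\le Q^{2k-3}L$ for $k\ge2$), $(2k)!\le(2k)^{2k}$, and — crucially — the fact that $k\le\frac12(\log z)^{1/3}$ forces $(4k)^{2k}=N^{o(1)}=o(Q)$, each of these two terms is $\ll\widetilde C^{2k}\bigl(k^{2k}+D^{2k}\bigr)\le\widetilde C^{2k}(k+D)^{2k}$ once $N$ is large (the cases $k=1$ and $L<C$ being immediate). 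Taking $2k$th roots gives $\|a*\sigma\|_{2k}\ll k+D$, which is the claim.

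I expect the main obstacle to be the sieve step: obtaining a single estimate valid simultaneously for all $\sim|\Sigma|^{2k}$ tuples, which rests on the singular series being bounded by an absolute constant independent of $j$ and of the spacing of the $t_r$ — exactly where the $W$‑trick and the restriction $k\ll(\log z)^{1/3}$ are used — together with disposing, via the trivial bound $T(\vec s)\le M/P$, of the ``spread‑out'' configurations for which $M$ is too small to sieve usefully.
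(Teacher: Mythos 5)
The statement you were asked to prove is Theorem~\ref{thm: Klimov's Theorem} itself --- Klimov's explicit Selberg-sieve upper bound for $k$ simultaneous linear forms, which the paper does not prove but quotes verbatim from \cite{Klimov1958SelbergExplicit}. Your proposal does not address this statement at all: it is an argument for Proposition~\ref{prop: Key Sieve Lemma} (the $L^{2k}$ bound on $\|a*\sigma\|_{2k}$), and in its central step it explicitly \emph{invokes} Theorem~\ref{thm: Klimov's Theorem} as a black box (``For $M\ge N^{1/3}$ I apply Klimov's theorem\dots''). As a proof of the theorem in question this is circular, and nothing in the proposal supplies the actual content of Klimov's result: there is no Selberg sieve construction for the polynomial $\prod_{i=1}^{k}(q_{i}n+l_{i})$, no derivation of the main term $\frac{M}{\log^{k}v}\,k!\prod_{p}\bigl(1-\frac{\rho(p)}{p}\bigr)\bigl(1-\frac{1}{p}\bigr)^{-k}$ with its explicit $k!$ dependence, and no control of the error term through the quantity $u_{0}=\max(q_{i},|l_{i}q_{j}-l_{j}q_{i}|)$, which is precisely what distinguishes Klimov's version (and makes it usable here, with the dependence on the number of forms explicit) from, say, Theorem~5.7 of \cite{HalberstamRichert}. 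A genuine proof would have to carry out the sieve: choose Selberg weights $\lambda_{d}$ supported on $d\le D$ squarefree with $p\mid d\Rightarrow p\le v$, show $M_{v}(q_{i},l_{i})\le\sum_{n\le M}\bigl(\sum_{d\mid\prod_{i}(q_{i}n+l_{i})}\lambda_{d}\bigr)^{2}$, evaluate the main term via the multiplicative density $\rho(p)/p$ to get the reciprocal of $G(D)=\sum_{d\le D}\mu^{2}(d)\prod_{p\mid d}\frac{\rho(p)}{p-\rho(p)}$, lower-bound $G(D)$ to produce the stated $k!/\log^{k}v$ factor with the singular series, and bound the remainder $\sum_{d_{1},d_{2}\le D}|\lambda_{d_{1}}\lambda_{d_{2}}|\,\rho([d_{1},d_{2}])$ using $v\le\sqrt{M}/\log^{2k}M$, tracking how primes dividing the resultants $|l_{i}q_{j}-l_{j}q_{i}|$ (hence the $\log\log u_{0}/\log v$ error) enter. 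None of this appears.

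As a side remark: read as an attempt at Proposition~\ref{prop: Key Sieve Lemma}, your argument is close in spirit to the paper's own proof of that proposition (moment expansion, splitting by the number of distinct coordinates, Klimov for the all-distinct case, the singular series controlled by counting at most $O(k^{2}\log P/\log z)$ bad primes $p>z$ using $k\le\tfrac12(\log z)^{1/3}$, and a cruder bound for degenerate tuples), with your Stirling-number bookkeeping replacing the paper's simpler $\sum_{r<2k}|\Sigma|^{-(2k-r)}$ estimate and Brun--Titchmarsh step. But that is not the statement under review, and for it your proposal would need to be assessed separately.
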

We elect to reference the above theorem, rather than theorem 5.7 from
Halberstam and Richert \cite{HalberstamRichert}, as Klimov's result
allows us to make the dependence on the number variables explicit.
As a direct corollary, we have that
\begin{lem}
\label{lem: Klimov redone}Let $k\geq2$, and let $W$ and $b$ satisfy
$\log b\leq2\log P$ and $\log W\leq2\log P$, where $P\geq P_{0}$
for some fixed $P_{0}$. Suppose that we have $k$ pairwise distinct
$b_{i}$, all relatively prime to $W$, such that $b_{i}\leq b$ for
$1\leq i\leq k$, and that $k\leq\frac{\log P}{12\log\log P}$. Then
\[
\left|\left\{ n\leq P:\ b_{1}+nW,\ \dots,\ b_{k}+nW\ \text{all prime}\right\} \right|\ll P\frac{3^{k}k!}{\log^{k}P}\prod_{p}\left(1-\frac{\rho(p)}{p}\right)\left(1-\frac{1}{p}\right)^{-k},
\]
where the constant does not depend on $k$.\end{lem}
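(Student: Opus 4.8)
The plan is to apply Klimov's theorem (Theorem~\ref{thm: Klimov's Theorem}) to the $k$ linear forms $q_in+l_i:=Wn+b_i$, $1\le i\le k$, with $M=P$ and with the sieving level taken to be $v=P^{1/3}$. The starting point is that if $Wn+b_i$ is a prime exceeding $v$ then no prime $p\le v$ divides it, so any $n\le P$ counted on the left-hand side is either counted by $M_v(q_i,l_i)$ or else satisfies $Wn+b_i\le v$ for some $i$. For a fixed $i$ and a fixed prime $q\le v$ there is at most one $n$ with $Wn+b_i=q$, so the number of $n$ of the second type is at most $k\pi(v)\le kP^{1/3}$, whence
\[
\left|\left\{n\le P:\ Wn+b_1,\dots,Wn+b_k\ \text{all prime}\right\}\right|\le M_v(q_i,l_i)+O\!\left(kP^{1/3}\right).
\]

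Next I would check the hypotheses of Klimov's theorem for these parameters. The only restrictive one is $v\le\sqrt{M}/\log^{2k}M$; with $v=P^{1/3}$, $M=P$ this reads $\log^{2k}P\le P^{1/6}$, equivalently $2k\log\log P\le\frac{1}{6}\log P$, which is exactly the assumption $k\le\frac{\log P}{12\log\log P}$ once $P\ge P_0$. The forms are non-degenerate because the $b_i$ are pairwise distinct, so $u_{i,j}=|l_iq_j-l_jq_i|=W|b_i-b_j|\ne0$; moreover $u_0=\max\!\left(W,\max_{i,j}W|b_i-b_j|\right)\le Wb$, hence $\log u_0\le\log W+\log b\le4\log P=12\log v$, so that $u_0\le\exp(\log^2v)$ for $P$ large and the hypothesis $u_0=O(\exp(\log^Bv))$ holds with $B=2$; finally $k\ge2$ and $v\ge v_0$ for $P$ large. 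Klimov's theorem then yields
\[
M_v(q_i,l_i)\le\frac{P}{\log^kv}\,k!\prod_p\left(1-\frac{\rho(p)}{p}\right)\left(1-\frac1p\right)^{-k}\left(1+O\!\left(\frac{\log\log u_0}{\log v}\right)\right).
\]
Since $\log v=\frac{1}{3}\log P$ we have $\log^{-k}v=3^k\log^{-k}P$; and since $\log\log u_0\le\log(4\log P)$ while $\log v\gg\log P$, the last factor equals $1+O(\log\log P/\log P)=O(1)$ with an absolute implied constant, in particular one independent of $k$ (neither $v$ nor the bound for $u_0$ involves $k$). Substituting, the displayed inequality above becomes the assertion of the lemma, provided the term $O(kP^{1/3})$ is absorbed into the implied constant.

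The step I expect to require the most care — though it stays elementary — is verifying that $O(kP^{1/3})$ is indeed dominated by the claimed main term $P\cdot3^kk!\log^{-k}P\cdot\prod_p(1-\rho(p)/p)(1-1/p)^{-k}$. This reduces to a lower bound for the product together with the size of $3^kk!/\log^kP$ over the range $k\le\frac{\log P}{12\log\log P}$: using $\rho(p)=0$ for $p\mid W$ (as $p\nmid b_i$) and $\rho(p)\le\min(k,p)$ otherwise, a short calculation with Mertens' and Stirling's estimates shows that this main term stays above $P^{1-o(1)}$, hence far above $kP^{1/3}$, whenever it is nonzero; and in the degenerate case $\rho(p)=p$ — possible only for some $p\le k$ — every $n$ with $Wn+b_i>p$ for all $i$ already has some $Wn+b_i$ divisible by $p$ and so composite, forcing the left-hand side to be $O(k)$. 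Apart from this, the proof is a direct substitution into Klimov's estimate.
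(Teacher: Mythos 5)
Your proposal is correct and takes essentially the same route as the paper: a direct application of Klimov's theorem with $q_i=W$, $l_i=b_i$, $M=P$, $v=P^{1/3}$, verifying $v\le\sqrt{M}/\log^{2k}M$ from $k\le\frac{\log P}{12\log\log P}$ and the $u_0$ hypothesis from $\log b,\log W\le 2\log P$. The additional bookkeeping you carry out for the $n$ with some $Wn+b_i\le v$ and for the degenerate case $\rho(p)=p$ is extra care that the paper's one-line proof simply omits (it implicitly treats the prime-counting set as contained in the sifted set, and in the paper's application every prime $p\le 2k$ divides $W$, so $\rho(p)=0$ there and the degenerate case never occurs).
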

\begin{proof}
In Theorem \ref{thm: Klimov's Theorem}, let $q_{i}=W$, $l_{i}=b_{i}$,
$M=P$, $v=P^{1/3}$, and so that $u_{0}\leq2bW$. To apply Klimov's
theorem, we must have $v\leq\frac{\sqrt{P}}{\log^{2k}(P)}$, and $u_{0}=O\left(\exp\left(\log^{B}v\right)\right)$.
The first condition follows from the assumption that $k\leq\frac{1}{12}\frac{\log P}{\log\log P}.$
The upper bound from $u_{0}$ is satisfied with $B=15$ since $\log(2bW)\leq5\log P\leq15\log v$
by the hypothesis of the lemma. It follows that 
\[
\frac{\log\log u_{0}}{\log v}=O\left(\frac{\log\log N}{\log N}\right),
\]
and so 
\[
M_{v}\left(W,b_{i}\right)\ll P\frac{3^{k}k!}{\log^{k}P}\prod_{p}\left(1-\frac{\rho(p)}{p}\right)\left(1-\frac{1}{p}\right)^{-k},
\]
as desired.
\end{proof}
We note that if $k>\frac{\log P}{12\log\log P}$, the term $\frac{3^{k}k!}{\log^{k}P}$
will be large than $1$, and so the bound is weaker than the trivial
upper of $P$. Using this lemma, we prove proposition \ref{prop: Key Sieve Lemma}.
\begin{proof}
By the definition of the $L^{2k}$ norm and the convolution, we have
that 
\[
\|a*\sigma\|_{2k}^{2k}=\mathbb{E}_{x}|\left(a*\sigma\right)(x)|^{2k}=\mathbb{E}_{x}\left|\mathbb{E}_{y}\sigma(y)a(x-y)\right|^{2k}.
\]
Expanding the sum, $\|a*\sigma\|_{2k}^{2k}$ is bounded above by 
\[
\mathbb{E}_{y_{1},\dots,y_{2k}}|\sigma(y_{1})|\cdots|\sigma(y_{2k})|\mathbb{E}_{x}a(x-y_{1})\cdots a(x-y_{2k}).
\]
Since $a$ is supported on $\left[0,\frac{P}{3}\right],$ and $\sigma$
is supported on $\left[-\frac{P}{3},\frac{P}{3}\right],$ there can
be no wrap around inside $\mathbb{Z}/P\mathbb{Z}$, and we have the
upper bound

\begin{equation}
\mathbb{E}_{x}a(x-y_{1})\cdots a(x-y_{2k})\leq\frac{1}{P}\left(\frac{\log N}{\log z}\right)^{2k}\left|\mathcal{A}\left(y_{1},y_{2},\dots,y_{2k}\right)\right|\label{eq:definition of a bound}
\end{equation}
where 
\[
\mathcal{A}\left(y_{1},y_{2},\dots,y_{2k}\right)=\left\{ n\leq P:\ b+\left(n-y_{1}\right)W,\ \dots,\ b+\left(n-y_{2k}\right)W\ \text{all prime}\right\} .
\]
The size of this set of primes is bounded above by the number of $n\leq P$
such that $b+\left(n-y_{1}\right)W$ is prime, and so by the Brun-Titchmarsh
inequality 
\[
\left|\mathcal{A}\left(y_{1},y_{2},\dots,y_{2k}\right)\right|\leq\frac{2PW}{\phi(W)\log\left(P/W\right)},
\]
which we combine with the inequalities $P/W\geq N^{1/3}$ and $\frac{W}{\phi(W)}\leq2\log z$
to obtain
\begin{equation}
\left|\mathcal{A}\left(y_{1},y_{2},\dots,y_{2k}\right)\right|\leq\frac{12P\log z}{\log N}.\label{eq:basic sieve bound}
\end{equation}
However we will need our sieving lemma in the case where each of the
$y_{i}$ is distinct. Let 
\[
I_{l}=\left\{ \left(y_{1},\dots,y_{2k}\right):\ y_{i}\leq\frac{P}{2},\text{ with }l\text{ or less distinct coordinates }y_{i}\right\} ,
\]

\[
J_{l}=\left\{ \left(y_{1},\dots,y_{2k}\right):\ y_{i}\leq\frac{P}{2},\text{ with exactly }l\text{ distinct coordinates }y_{i}\right\} ,
\]
so that we may bound our quantity from above by a sum from $r=1$
to $2k$, and over $\left(y_{1},\dots,y_{2k}\right)\in I_{r}$ or
$J_{r}$. We split this into two cases. When $r<2k$, we will bound
above by the sum over $I_{r}$, and when $r=2k$, we will use a sum
over elements in the set $J_{2k}$. For $r<2k$, we are looking at
\begin{equation}
\frac{1}{P^{2k+1}}\sum_{r=1}^{2k-1}\sum_{\left(y_{1},\dots,y_{2k}\right)\in I_{r}}|\sigma(y_{1})|\cdots|\sigma(y_{2k})|\sum_{x}a(x-y_{1})\cdots a(x-y_{2k})\label{eq: quantity when r<2k}
\end{equation}
which by (\ref{eq:definition of a bound}) and (\ref{eq:basic sieve bound})
is 
\[
\ll\left(\frac{\log N}{\log z}\right)^{2k-1}\sum_{r=1}^{2k-1}\frac{1}{P^{2k}}\sum_{\left(y_{1},\dots,y_{2k}\right)\in I_{r}}|\sigma(y_{1})|\cdots|\sigma(y_{2k})|.
\]
Each term in the sum over $r$ on the right hand side may be bounded
above by $1/|\Sigma|^{2k-r}$ since 
\begin{eqnarray*}
\frac{1}{P^{2k}}\sum_{\left(y_{1},\dots,y_{2k}\right)\in I_{r}}|\sigma(y_{1})|\cdots|\sigma(y_{2k})| & = & \frac{1}{|\Sigma|^{2k-r}}\left(\mathbb{E}_{y_{1},\dots,y_{r}}\sigma\left(y_{1}\right)\cdots\sigma\left(y_{r}\right)\right)\\
 & = & \frac{1}{|\Sigma|^{2k-r}}.
\end{eqnarray*}
If $|\Sigma|=1$, the lemma is trivial, and if $|\Sigma|\geq2$ we
have 
\[
\sum_{r=1}^{2k-1}\frac{1}{|\Sigma|^{2k-r}}\leq\frac{1}{|\Sigma|-1}\leq\frac{2}{|\Sigma|},
\]
which implies that the quantity in (\ref{eq: quantity when r<2k})
is 
\[
\ll\left(\frac{\log N}{\log z}\right)^{2k-1}\frac{2}{|\Sigma|}.
\]
To bound the cardinality of $\mathcal{A}\left(y_{1},\dots,y_{2k}\right)$
for $\left(y_{1},\dots,y_{2k}\right)\in J_{2k}$, we apply Lemma \ref{lem: Klimov redone}.
The conditions of the lemma are satisfied as $b,W\leq N^{1/3}$ and
$k\leq\frac{\log P}{12\log\log P}$ since $k\leq\frac{1}{2}\log^{1/3}z$.
Thus for any $2k$-tuple $\left(y_{1},\dots,y_{2k}\right)\in J_{2k}$,
\begin{equation}
\left|\mathcal{A}\left(y_{1},y_{2},\dots,y_{2k}\right)\right|\ll\frac{3^{2k}(2k)!P}{\left(\log P\right)^{2k}}\prod_{p}\left(1-\frac{\rho(p)}{p}\right)\left(1-\frac{1}{p}\right)^{-2k}.\label{eq:halberstam upper bound quantity}
\end{equation}
On the right hand side of equation (\ref{eq:halberstam upper bound quantity})
$\rho$ is defined so that $\rho(p)=0$ if $p\leq z$, $\rho(p)=2k$
if $p>z$ and $p\nmid(y_{i}-y_{j})$ for all $i\neq j$. The product
over all $p>z$ where $\rho(p)=2k$ is extremely well behaved, and
we have that
\[
\prod_{p>z}\left(1-\frac{2k}{p}\right)\left(1-\frac{1}{p}\right)^{-2k}
\]
is bounded from above and below by absolute constants. To see why,
notice that 
\begin{eqnarray*}
\log\prod_{p>z}\left(1-\frac{2k}{p}\right)\left(1-\frac{1}{p}\right)^{-2k} & = & 2k\sum_{p>z}\sum_{j=1}^{\infty}\frac{1}{j}\left(\frac{1}{p}\right)^{j}-\sum_{p>z}\sum_{j=1}^{\infty}\frac{1}{j}\left(\frac{2k}{p}\right)^{j}\\
 & = & \sum_{p>z}\sum_{j=2}^{\infty}\frac{1}{j}\left(2k\left(\frac{1}{p}\right)^{j}-\left(\frac{2k}{p}\right)^{j}\right),
\end{eqnarray*}
and as $k\leq\frac{1}{2}\log z$, we have the lower bound 
\begin{eqnarray*}
\sum_{p>z}\sum_{j=2}^{\infty}\frac{1}{j}\left(2k\left(\frac{1}{p}\right)^{j}-\left(\frac{2k}{p}\right)^{j}\right) & \geq & -\sum_{j=2}^{\infty}\frac{1}{j}\left(\log z\right)^{j}\sum_{p>z}\frac{1}{p^{j}}\\
 & \geq & -\sum_{j=2}^{\infty}\frac{1}{j}\left(\log z\right)^{j}\sum_{n>z}\frac{1}{n^{j}}\\
 & \gg & -\frac{\log^{2}z}{z},
\end{eqnarray*}
and similarly we have the upper bound 
\[
\sum_{p>z}\sum_{j=2}^{\infty}\frac{1}{j}\left(2k\left(\frac{1}{p}\right)^{j}-\left(\frac{2k}{p}\right)^{j}\right)\leq0
\]
as $2k\left(\frac{1}{p}\right)^{j}\leq\left(\frac{2k}{p}\right)^{j}$
for all $j$. From these bounds, it follows that for a constant independent
of $k$, 
\[
\prod_{p}\left(1-\frac{\rho(p)}{p}\right)\left(1-\frac{1}{p}\right)^{-2k}\ll\left(\log z\right)^{2k}\prod_{\begin{array}{c}
p>z\\
p|y_{i}-y_{j}
\end{array}}\left(1-\frac{\rho(p)}{p}\right)\left(1-\frac{1}{p}\right)^{-2k}.
\]
Since $|y_{i}-y_{j}|\leq P$, and any integer $y\leq P$ can have
at most $\log_{z}P$ prime factors greater than $z$, we see that
there are at most $\left(2k\right)^{2}\frac{\log P}{\log z}$ primes
greater than $z$ which could divide some difference $y_{i}-y_{j}$.
For each $p$ that divides $y_{i}-y_{j}$ for some $i,j$, the worst
case is when$\rho(p)=1$, and we may assume that this is the case
to obtain an upper bound. As $\left(1-\frac{1}{p}\right)^{-1}\leq\frac{z}{z-1}$
for $p>z$, we have 

\[
\prod_{\begin{array}{c}
p>z\\
p|y_{i}-y_{j}
\end{array}}\left(1-\frac{\rho(p)}{p}\right)\left(1-\frac{1}{p}\right)^{-2k}\leq\left(\frac{z}{z-1}\right)^{\left(4k^{2}\right)(2k-1)\frac{\log P}{\log z}}.
\]
Recall that $\log P\leq\log N\leq4z$ since $z=\frac{1}{4}\log N$.
The exponent is bounded above by 
\begin{eqnarray*}
\left(4k^{2}\right)(2k-1)\frac{\log P}{\log z} & \leq & 8k^{3}\frac{\log P}{\log z}\\
 & \leq & \frac{32k^{3}}{\log z}z\\
 & \leq & 4z,
\end{eqnarray*}
where the final inequality follows from the assumption that $k\leq\frac{1}{2}\left(\log z\right)^{\frac{1}{3}}$.
Since 
\[
\left(\frac{z}{z-1}\right)^{4z}\leq5e^{4}
\]
for $z\geq2$, we obtain the inequality 
\[
\prod_{\begin{array}{c}
p>z\\
p|y_{i}-y_{j}
\end{array}}\left(1-\frac{\rho(p)}{p}\right)\left(1-\frac{1}{p}\right)^{-2k}\leq5e^{4},
\]
and so equation (\ref{eq:halberstam upper bound quantity}) becomes
\[
\left|\mathcal{A}\left(y_{1},y_{2},\dots,y_{2k}\right)\right|\leq C3^{2k}(2k)!P\frac{\left(\log z\right)^{2k}}{\left(\log P\right)^{2k}},
\]
for an absolute constant $C$. Thus, for any $\left(y_{1},\dots,y_{2k}\right)\in J_{2k}$,
\[
\mathbb{E}_{x}a(x-y_{1})\cdots a(x-y_{2k})\leq C3^{2k}(2k)!.
\]
Since 
\[
\frac{1}{P^{2k}}\sum_{\left(y_{1},\dots,y_{2k}\right)\in J_{2k}}|\sigma(y_{1})|\cdots|\sigma(y_{2k})|\leq\mathbb{E}_{y_{1},\dots,y_{2k}}|\sigma(y_{1})|\cdots|\sigma(y_{2k})|=1,
\]
the sum over all the $2k$-tuples in $J_{2k}$ is $\leq1$. Combining
the work done so far, we have proven that 
\[
\|a*\sigma\|_{2k}^{2k}\ll3^{2k}(2k)!+\frac{2k}{|\Sigma|}\left(\frac{\log N}{\log z}\right)^{2k-1}.
\]
The stated result then follows from the fact that $\left(n+m\right)^{\frac{1}{2k}}\leq n^{\frac{1}{2k}}+m^{\frac{1}{2k}}$
for $n,m,k\geq1$, and since 
\[
\left(3^{2k}(2k)!\right)^{\frac{1}{2k}}\leq4k,\ \ \text{and}\ \ \left(3k\right)^{\frac{1}{2k}}\leq e^{\frac{3}{2e}}\leq2.
\]

\end{proof}

\section{Main Theorem \label{sec:Main-Theorem}}

Let $a$, $N$, $W$, $P$ and $z$ be defined as in section \ref{sec:Sieving-the-Primes}.
Following \cite{HelfgottRoton}, we define 
\[
R=\text{Spec}_{\delta}\left(a\right)\cup\{1\}=\left\{ x\in\mathbb{Z}/P\mathbb{Z}:\ |\widehat{a}(x)|\geq\delta\right\} \cup\{1\},
\]
and 
\[
B=B\left(R,\epsilon\right)=\left\{ n\in\mathbb{Z}/P\mathbb{Z}:\ \forall x\in R,\ \|\frac{nx}{P}\|\leq\epsilon\right\} ,
\]
where $\|x\|$ denotes the distance from $x$ to the nearest integer.
The set $B$ is called a \emph{Bohr set} with radius $\epsilon$ and
frequency set $R$. Set $\sigma=\frac{1}{\mu(B)}1_{B}$ to be the
normalized indicator function of the Bohr set $B$. By including the
element $1$ in the set $R$, it follows that $\sigma$ will be supported
on $\left[-\frac{P}{4},\frac{P}{4}\right]$ inside $\mathbb{Z}/P\mathbb{Z}$
when $\epsilon<\frac{1}{4}$. Let $h=a*\sigma$ be our prime indicator
smoothed out by the Bohr set $B$. Notice that
\[
\|\sigma\|_{1}=\frac{1}{\mu(B)}\mathbb{E}_{x\in\mathbb{Z}/P\mathbb{Z}}1_{B}=1,
\]
and 
\begin{eqnarray*}
\|h\|_{1} & = & \mathbb{E}_{x\in\mathbb{Z}/P\mathbb{Z}}\mathbb{E}_{y\in\mathbb{Z}/P\mathbb{Z}}\frac{1_{B}(y)}{\mu(B)}a(x-y)\\
 & = & \|\sigma\|_{1}\|a\|_{1},
\end{eqnarray*}
so that by (\ref{eq:L^1 norm bound on a}), which gave the bound $\|a\|_{1}\geq\frac{\alpha}{10}$,
we have 
\begin{equation}
\|h_{1}\|\geq\frac{\alpha}{10}.\label{eq:L^1 bound on h}
\end{equation}
Our goal is to show that there is little difference between the three
term arithmetic progression operator applied to $a$ and $h,$ and
then prove that $\Lambda(h,h,h)$ is large. Let 
\[
\Delta=\left|\Lambda\left(a,a,a\right)-\Lambda\left(h,h,h\right)\right|
\]
where $\Lambda$ is the three term arithmetic progression operator.
In Helfgott and De Roton's paper \cite{HelfgottRoton}, equation (2.6)
on page 7 states that
\begin{lem}
\label{lem: Helfgott and De Roton Lemma} For the above definition
of $\Delta,\epsilon,\delta$ we have 
\[
\Delta\ll\epsilon+\delta^{3/5}.
\]

\end{lem}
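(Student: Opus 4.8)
The plan is to pass to the Fourier side. A standard computation with Fourier inversion gives $\Lambda(f,f,f)=\sum_{t\in\mathbb{Z}/P\mathbb{Z}}\widehat{f}(t)^{2}\widehat{f}(-2t)$ for any $f:\mathbb{Z}/P\mathbb{Z}\to\mathbb{C}$, so since $\widehat{h}=\widehat{a}\cdot\widehat{\sigma}$ by (\ref{eq:diagonlized fourier transform}) we obtain the exact identity
\[
\Delta=\left|\sum_{t\in\mathbb{Z}/P\mathbb{Z}}\widehat{a}(t)^{2}\,\widehat{a}(-2t)\left(1-\widehat{\sigma}(t)^{2}\widehat{\sigma}(-2t)\right)\right|.
\]
I would then split the sum according to whether $t\in R$ or $t\notin R$, using the Bohr-set structure of $B$ on the first range and the spectral gap $|\widehat{a}(t)|<\delta$ on the second.

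On the range $t\in R$: by the definition of $B$, every $n\in B$ satisfies $\|nt/P\|\le\epsilon$ and hence $\|2nt/P\|\le2\epsilon$, where $\|\cdot\|$ is distance to the nearest integer. Since $\widehat{\sigma}(t)=\frac{1}{|B|}\sum_{n\in B}e^{2\pi int/P}$, $\widehat{\sigma}(0)=1$, and $|e^{2\pi i\theta}-1|\le2\pi\|\theta\|$, this yields $|\widehat{\sigma}(t)-1|\le2\pi\epsilon$ and $|\widehat{\sigma}(-2t)-1|\le4\pi\epsilon$; as $|\widehat{\sigma}|\le\|\sigma\|_{1}=1$ everywhere, a telescoping estimate gives $|1-\widehat{\sigma}(t)^{2}\widehat{\sigma}(-2t)|\ll\epsilon$. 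Thus the contribution of $t\in R$ to $\Delta$ is $\ll\epsilon\sum_{t}|\widehat{a}(t)|^{2}|\widehat{a}(-2t)|$. On the range $t\notin R$ I would bound $|1-\widehat{\sigma}(t)^{2}\widehat{\sigma}(-2t)|\le2$ and write $|\widehat{a}(t)|^{2}\le\delta^{3/5}|\widehat{a}(t)|^{7/5}$, so that this contribution is $\ll\delta^{3/5}\sum_{t}|\widehat{a}(t)|^{7/5}|\widehat{a}(-2t)|$.

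In both cases what remains is a sum of the form $\sum_{t}|\widehat{a}(t)|^{\beta}|\widehat{a}(-2t)|$ with $\beta\in\{7/5,\,2\}$, which I would bound by Hölder's inequality with exponents $3/2$ and $3$. Since $t\mapsto-2t$ is a bijection of $\mathbb{Z}/P\mathbb{Z}$ (as $P$ is an odd prime), this gives $\sum_{t}|\widehat{a}(t)|^{2}|\widehat{a}(-2t)|\le\|\widehat{a}\|_{\ell^{3}}^{3}$ and $\sum_{t}|\widehat{a}(t)|^{7/5}|\widehat{a}(-2t)|\le\|\widehat{a}\|_{\ell^{21/10}}^{7/5}\|\widehat{a}\|_{\ell^{3}}$. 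The crucial input is then a restriction estimate for the primes in the spirit of Green \cite{GreenRothinPrimes}: because $a=\frac{\log N}{\log z}1_{A_{0}}$ is dominated pointwise by a pseudorandom majorant on $\mathbb{Z}/P\mathbb{Z}$ (an enveloping sieve for the $W$-tricked primes) — this is where $z=\frac14\log N$ being small relative to $P\ge N^{1/3}$ is used — one has $\|\widehat{a}\|_{\ell^{q}}\ll_{q}1$ for every fixed $q>2$. Applying this at $q=3$ and $q=21/10$ makes both surviving sums absolute constants, so the two ranges contribute $\ll\epsilon$ and $\ll\delta^{3/5}$ respectively, and adding them proves $\Delta\ll\epsilon+\delta^{3/5}$. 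This reproduces the argument of Helfgott and De Roton \cite{HelfgottRoton}.

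The only step that is not elementary harmonic analysis is the restriction estimate $\|\widehat{a}\|_{\ell^{q}}\ll_{q}1$; proving it requires constructing (or quoting) the majorant for the $W$-tricked primes, checking its pseudorandomness, and verifying that $a$ is bounded by a fixed multiple of it. I expect that to be the main obstacle, while the Fourier identity, the split into the two spectral ranges, the Bohr-set decay bound, and the Hölder bookkeeping are all routine.
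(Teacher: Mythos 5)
The paper does not prove this lemma itself: it quotes equation (2.6) of Helfgott--de Roton, remarking only that the proof rests on the Green--Tao restriction theory of the Selberg sieve \cite{GreenTaoSelbergSieve}. Your sketch is a correct reconstruction of exactly that argument --- the Fourier identity for $\Lambda$, the split at the $\delta$-spectrum with the Bohr-set estimate $|1-\widehat{\sigma}(t)^{2}\widehat{\sigma}(-2t)|\ll\epsilon$ for $t\in R$, the H\"older step, and the restriction bound $\|\widehat{a}\|_{\ell^{q}}\ll_{q}1$ at the fixed exponents $q=21/10$ and $q=3$ --- and you rightly flag the restriction estimate as the one non-elementary input; note that the paper's own $\ell^{4}$ bound (Proposition \ref{prop: Key Sieve Lemma} with $\sigma=a$) would not suffice for your H\"older exponents, which is precisely why the paper defers to Green--Tao here.
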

The proof of this lemma makes use of Green and Tao's results on the
restriction theory of the Selberg sieve \cite{GreenTaoSelbergSieve}.
Applying proposition \ref{prop: Key Sieve Lemma} with $\sigma=a$,
we find that 
\[
\|a*a\|_{2}\ll1,
\]
and so $\|\hat{a}\|_{4}\ll1$ since $\|a*a\|_{2}^{2}=\|\hat{a}\|_{4}^{4}$
by (\ref{eq: plancherel}) and (\ref{eq:diagonlized fourier transform}).
This yields the bound $|R|\leq C_{4}\delta^{-4}$, on the size of
the dimension of the Bohr set $B$, for an absolute constant $C_{4}$,
as 
\[
|R|\delta^{4}\leq\sum_{t}\left|\hat{a}(t)\right|^{4}\leq C_{4},
\]
 A well known pigeon hole argument tells us that $|B\left(R,\epsilon\right)|\geq N\epsilon^{|R|},$
and so 
\[
\log|B|\geq\log N-|R||\log\epsilon|\geq\log N-C_{4}\delta^{-4}|\log\epsilon|.
\]
We note that an equation nearly identical to the above appears on
page 9 of \cite{HelfgottRoton}. We chose to deduce it again since
the bound on $\|\hat{a}\|_{4}$ was obtained in a different way. From
now on, we will assume that $\epsilon,\delta$ satisfy 
\begin{equation}
C_{4}\delta^{-4}|\log\epsilon|\leq\frac{1}{2}\log N,\label{eq:epsilon delta req}
\end{equation}
so that 
\[
|B|\geq N^{\frac{1}{2}}.
\]
For $k\leq\frac{1}{2}\left(\log z\right)^{\frac{1}{3}}$, proposition
\ref{prop: Key Sieve Lemma} allows us to bound the $\ell^{2k}$ norm
of $h$. Using the inequality 
\[
\frac{1}{2}\log N\geq\left(\log\log N\right)^{\frac{4}{3}},
\]
which holds for all $N\geq2$, along with the fact that 
\[
k\leq\frac{1}{2}\left(\log z\right)^{\frac{1}{3}}\leq\frac{1}{2}\left(\log\log N\right)^{\frac{1}{3}},
\]
we see that 
\[
N^{\frac{1}{2}}\geq\left(\log N\right)^{2k}\geq\left(\frac{\log N}{\log z}\right)^{2k},
\]
and consequently 
\[
|B|\geq N^{\frac{1}{2}}\geq\left(\frac{\log N}{\log z}\right)^{2k}.
\]
Proposition \ref{prop: Key Sieve Lemma} then implies that 
\begin{equation}
\|h\|_{2k}=\|a*\sigma\|_{2k}\ll k.\label{eq:key k norm bound}
\end{equation}
Using this bound $L^{2k}$ norm of $h=a*\sigma$, along with Sanders
bound on Roth's theorem, we are able to show that $\Lambda(h,h,h)$
must be large. 
\begin{prop}
\label{prop: lamba(h,h,h) is large} There exists positive constants
$c_{1},N_{0}>0$ such that if $N\geq N_{0}$, for any $1\leq k\leq\frac{1}{2}\left(\log z\right)^{\frac{1}{3}}$
we have 
\[
\Lambda\left(h,h,h\right)\gg\exp\left(-c_{1}\left(\frac{\alpha}{k}\right)^{-q_{2k}}\left(\log\frac{1}{\alpha}\right)^{5}\right)
\]
where $q_{2k}=\left(1-\frac{1}{2k}\right)^{-1}.$
\end{prop}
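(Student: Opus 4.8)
The plan is to read off the lower bound for $\Lambda(h,h,h)$ from two properties of $h=a*\sigma$ already established: it is non-negative with large average $\|h\|_{1}\ge\alpha/10$ by (\ref{eq:L^1 bound on h}), and it has a controlled high moment $\|h\|_{2k}\ll k$ by (\ref{eq:key k norm bound}). (This parallels \cite{HelfgottRoton}, which in effect treats the $k=1$ case.) First I would pass to the super-level set
\[
S=\{x\in\mathbb{Z}/P\mathbb{Z}:\ h(x)\ge\alpha/20\}.
\]
Since $h\ge0$ and $h<\alpha/20$ off $S$, we have $\frac{1}{P}\sum_{x\in S}h(x)\ge\|h\|_{1}-\alpha/20\ge\alpha/20$, and H\"older's inequality with the conjugate exponents $2k$ and $q_{2k}=(1-\frac{1}{2k})^{-1}$ gives $\alpha/20\le\|h\|_{2k}\,\mu(S)^{1/q_{2k}}\ll k\,\mu(S)^{1/q_{2k}}$, so that $\mu(S)^{1/q_{2k}}\gg\alpha/k$, i.e.
\[
\mu(S)\gg\left(\frac{\alpha}{k}\right)^{q_{2k}}.
\]

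Writing $\delta=\mu(S)$, the non-negativity of $h$ and the pointwise bound $h\ge(\alpha/20)1_{S}$, together with the monotonicity of $\Lambda$ on non-negative functions, give
\[
\Lambda(h,h,h)\ \ge\ \left(\frac{\alpha}{20}\right)^{3}\Lambda(1_{S},1_{S},1_{S}),
\]
so everything reduces to bounding $\Lambda(1_{S},1_{S},1_{S})$ below in terms of $\delta$. For this I would invoke Sanders' bound on Roth's theorem \cite{Sanders2012roth} in the counting form supplied by the standard Varnavides averaging argument.

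Concretely, let $L$ be an integer such that every subset of $\{1,\dots,L'\}$ with $L'\ge L$ and relative density at least $\delta/2$ contains a non-trivial three-term progression; Sanders' bound $|A\cap[1,L']|\ll L'(\log\log L')^{5}/\log L'$ for progression-free $A$ forces $\log L\ll\delta^{-1}(\log\frac{1}{\delta})^{5}$. Averaging the density of $S$ over the $\asymp P^{2}$ length-$L$ progressions $\{a,a+d,\dots,a+(L-1)d\}$ of $\mathbb{Z}/P\mathbb{Z}$ with $d\ne0$ (each identified with $\{1,\dots,L\}$ so that three-term sub-progressions correspond), at least $\gg\delta P^{2}$ of them meet $S$ in relative density $\ge\delta/2$, and so each carries a genuine non-trivial three-term progression inside $S$; since a fixed three-term progression of $\mathbb{Z}/P\mathbb{Z}$ lies in only $O(L^{2})$ of these length-$L$ progressions, $S$ contains $\gg\delta P^{2}L^{-2}$ non-trivial three-term progressions, and hence
\[
\Lambda(1_{S},1_{S},1_{S})\ \gg\ \frac{\delta}{L^{2}}\ \gg\ \exp\left(-C\delta^{-1}\left(\log\frac{1}{\delta}\right)^{5}\right).
\]
One must check $L\le P$ for the averaging to be meaningful; this holds for $N$ large since $\delta\gg(\alpha/k)^{q_{2k}}\gg(\alpha/k)^{2}$ lies comfortably above the Roth threshold of $[P]$, using $\alpha\ge(\log N)^{-1/4}$, $k\le\frac{1}{2}(\log z)^{1/3}$ and $\log P\asymp\log N$.

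Finally I would substitute $\delta\gg(\alpha/k)^{q_{2k}}$: then $\delta^{-1}\ll(\alpha/k)^{-q_{2k}}$, and a short computation using $q_{2k}\le2$ and the smallness of $k$ reduces $\delta^{-1}(\log\frac{1}{\delta})^{5}$ to $\ll(\alpha/k)^{-q_{2k}}(\log\frac{1}{\alpha})^{5}$, while the prefactor $(\alpha/20)^{3}=\exp(-O(\log\frac{1}{\alpha}))$ is absorbed into the exponential; this produces the claimed bound $\Lambda(h,h,h)\gg\exp(-c_{1}(\alpha/k)^{-q_{2k}}(\log\frac{1}{\alpha})^{5})$. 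The level-set estimate and the monotonicity step are routine, and the closing exponent bookkeeping, while elementary, needs care with the admissible ranges of $k$, $\alpha$, $N$. The main obstacle is the middle step: extracting from Sanders' single-progression theorem a lower bound on the number of three-term progressions in $S$ that still carries precisely the exponent $5$ on $\log\frac{1}{\delta}$, and verifying the side conditions ($L\le P$ and $\delta$ above the Roth threshold) throughout.
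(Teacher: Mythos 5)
Your argument is essentially the paper's own proof: the level-set extraction via H\"older with exponents $2k$ and $q_{2k}$ is exactly Lemma \ref{lem:Extracting a dense set lemma} applied to $h$ using (\ref{eq:L^1 bound on h}) and (\ref{eq:key k norm bound}), followed by the same pointwise monotonicity step $\Lambda(h,h,h)\geq(\alpha/20)^{3}\Lambda(1_{S},1_{S},1_{S})$ and an appeal to Sanders' theorem. The only difference is that you derive the counting form of Sanders' bound by a Varnavides averaging argument (also handling the interval-versus-$\mathbb{Z}/P\mathbb{Z}$ passage), whereas the paper simply quotes that counting form as (\ref{eq:Sanders number bound}); your version fills in that cited step but does not change the route.
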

To prove this proposition, we will make use of the following lemma
which allows us to find a large subset where the function $h$ is
bounded below uniformly. 
\begin{lem}
\label{lem:Extracting a dense set lemma}Let $q,p>1$ be such that
$\frac{1}{q}+\frac{1}{p}=1$, and let $f:\mathbb{Z}/P\mathbb{Z}\rightarrow\mathbb{R}^{+}$
be a function with $\|f\|_{1}\geq\alpha$, and $\|f\|_{p}\leq C$
for some $C$. Then there exists a subset $L\subset\mathbb{Z}/P\mathbb{Z}$
such for all $n\in L$ $f(n)\geq\frac{\alpha}{2}$, and 
\[
\left(\frac{\alpha}{2C}\right)^{q}\leq\mu(L).
\]
\end{lem}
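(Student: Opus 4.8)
The plan is to prove Lemma \ref{lem:Extracting a dense set lemma} by a standard ``mass concentration'' argument: split the $\ell^1$-mass of $f$ into the part coming from small values and the part coming from large values, show the small-value part is negligible, and deduce that the large-value set $L=\{n:f(n)\geq\alpha/2\}$ carries a definite proportion of the mass; then bound $\mu(L)$ from below using H\"older's inequality and the hypothesis $\|f\|_p\leq C$.

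Concretely, I would first write
\[
\alpha\leq\|f\|_1=\mathbb{E}_{n}f(n)=\mathbb{E}_{n}f(n)1_{f(n)<\alpha/2}+\mathbb{E}_{n}f(n)1_{f(n)\geq\alpha/2}.
\]
The first term is at most $\frac{\alpha}{2}$ since $f\geq0$ and the indicator restricts to values below $\alpha/2$ (using $\mathbb{E}_n 1\leq 1$). Hence
\[
\mathbb{E}_{n}f(n)1_{L}(n)=\mathbb{E}_{n}f(n)1_{f(n)\geq\alpha/2}\geq\frac{\alpha}{2}.
\]
Now I apply H\"older's inequality with exponents $p$ and $q$ to the left-hand side:
\[
\frac{\alpha}{2}\leq\mathbb{E}_{n}f(n)1_{L}(n)\leq\|f\|_{p}\,\|1_{L}\|_{q}=\|f\|_{p}\,\mu(L)^{1/q}\leq C\,\mu(L)^{1/q}.
\]
Rearranging gives $\mu(L)^{1/q}\geq\frac{\alpha}{2C}$, i.e. $\mu(L)\geq\left(\frac{\alpha}{2C}\right)^{q}$, which is exactly the claimed bound, and by construction every $n\in L$ satisfies $f(n)\geq\alpha/2$.

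I do not expect any serious obstacle here; the only points requiring a modicum of care are: the normalization convention (all norms and expectations are averages over $\mathbb{Z}/P\mathbb{Z}$, so $\|1_L\|_q=\mu(L)^{1/q}$ rather than $|L|^{1/q}$, which is why the conclusion is phrased in terms of $\mu(L)$), and the fact that $f$ is real and nonnegative, which is needed both to drop the small-value contribution and to apply H\"older to $f\cdot 1_L$ without absolute-value complications. The hypothesis $\frac1q+\frac1p=1$ is used solely to license the H\"older step, and no properties of $h$ or the Bohr set beyond the stated $L^1$ and $L^p$ bounds enter.
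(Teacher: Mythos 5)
Your proposal is correct and follows essentially the same argument as the paper: define $L=\{n: f(n)\geq\alpha/2\}$, bound the contribution of the small values by $\alpha/2$, and apply H\"older's inequality with the normalized counting measure to get $\frac{\alpha}{2}\leq C\mu(L)^{1/q}$. The only difference is presentational (you spell out the splitting of the $L^1$ mass that the paper writes in one line).
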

\begin{proof}
Define $L=\left\{ n:\ f(n)\geq\frac{\alpha}{2}\right\} $, so that
$L$ is the largest possible set satisfying the first condition. Then,
if $1_{L}(n)$ is the indicator function for $L$ we have that 
\[
\alpha\leq\mathbb{E}_{n\in\mathbb{Z}/P\mathbb{Z}}f(n)\leq\frac{\alpha}{2}+\mathbb{E}_{n\in\mathbb{Z}/P\mathbb{Z}}f(n)1_{L}(n).
\]
Applying H\"{o}lder's inequality yields 

\[
\mathbb{E}_{n\in\mathbb{Z}/P\mathbb{Z}}f(n)1_{L}(n)\leq\|1_{L}\|_{q}\|f\|_{p}\leq C\left(\mu(L)\right)^{\frac{1}{q}}
\]
by our assumption that $\|f\|_{p}\leq C$. The lemma then follows
from the resulting inequality $\frac{\alpha}{2}\leq C\left(\mu(L)\right)^{1/q}.$
\end{proof}
Sanders improvement to Roth's theorem \cite{Sanders2012roth} states
that if $\Xi\subset\left[1,N\right]$ with density $\xi$, then 
\begin{equation}
\Lambda\left(1_{\Xi},1_{\Xi},1_{\Xi}\right)\gg\exp\left(-c\xi^{-1}\left(\log\frac{1}{\xi}\right)^{5}\right).\label{eq:Sanders number bound}
\end{equation}
Using this result along with (\ref{eq:key k norm bound}) and lemma
\ref{lem:Extracting a dense set lemma}, we are ready to finish the
proof of proposition \ref{prop: lamba(h,h,h) is large}. 
\begin{proof}
By equation (\ref{eq:key k norm bound}), it follows that $\|h\|_{2k}\ll k$.
Applying lemma \ref{lem:Extracting a dense set lemma} to the function
$h$, we obtain a subset $L\subset\mathbb{Z}/P\mathbb{Z}$ with 
\[
\mu\left(L\right)\gg\left(\frac{\alpha}{k}\right)^{q_{2k}},
\]
and $h(n)\geq\frac{\alpha}{20}$ for all $n\in L$, where $\frac{1}{q_{2k}}=1-\frac{1}{2k}$.
Restricting $h$ to this subset $L$, we obtain the lower bound 
\begin{equation}
\Lambda\left(h,h,h\right)\geq\frac{\alpha^{3}}{20^{3}}\Lambda\left(1_{L},1_{L},1_{L}\right).\label{eq:transfering to progressions in dense set}
\end{equation}
Applying the bound in (\ref{eq:Sanders number bound}) to our set
$L$, we have that 
\[
\Lambda\left(1_{L},1_{L},1_{L}\right)\gg\exp\left(-c\left(\frac{\alpha}{k}\right)^{-q_{2k}}\left(\log\frac{1}{\alpha}\right)^{5}\right)
\]
for some constant $c$, since the density of $L$ is $\gg\left(\frac{\alpha}{k}\right)^{q_{2k}}$.
By equation (\ref{eq:transfering to progressions in dense set}) it
follows that there is a constant $c_{1}$ such that 
\[
\Lambda\left(h,h,h\right)\gg\exp\left(-c_{1}\left(\frac{\alpha}{k}\right)^{-q_{2k}}\left(\log\frac{1}{\alpha}\right)^{5}\right),
\]
as desired.
\end{proof}
Lemma \ref{lem: Helfgott and De Roton Lemma} tells us that 
\[
\left|\Lambda(h,h,h)-\Lambda(a,a,a)\right|\ll\epsilon+\delta^{\frac{3}{5}},
\]
and Proposition \ref{prop: lamba(h,h,h) is large} implies that $\Lambda(h,h,h)$
must be very large. Recalling equation (\ref{eq:epsilon delta req}),
the requirement that $\epsilon,\delta$ satisfy $C_{4}\delta^{-4}|\log\epsilon|\leq\frac{1}{2}\log N$,
we are ready to put everything together and give a precise lower bound
for the size of $\Lambda(a,a,a)$. We now prove Theorem \ref{thm:Main Theorem}.
\begin{proof}
Suppose that $A\subset\mathcal{P}$ contains no nontrivial arithmetic
progressions. Then $A_{0}$ contains only the trivial 3 term arithmetic
progressions, and we have that the three term arithmetic progression
operator is bounded above by
\begin{equation}
\Lambda\left(a,a,a\right)\ll\frac{1}{P}\left(\frac{\log N}{\log z}\right)^{2}.\label{eq:trivial progressions}
\end{equation}
By equation (\ref{eq:trivial progressions}), lemma \ref{lem: Helfgott and De Roton Lemma}
and proposition \ref{prop: lamba(h,h,h) is large} , we must have
\begin{equation}
\frac{1}{P}\left(\frac{\log N}{\log z}\right)^{2}+\epsilon+\delta^{\frac{3}{5}}\gg\exp\left(-c_{1}\left(\frac{\alpha}{k}\right)^{-q_{2k}}\left(\log\frac{1}{\alpha}\right)^{5}\right).\label{eq:final proof inequality 1}
\end{equation}
In the above, for $N$ sufficiently large, the term $\frac{1}{P}\left(\frac{\log N}{\log z}\right)^{2}$
will be negligible compared to the right hand side as we assumed that
$\alpha>\left(\log N\right)^{-1/4}$. Choosing $\epsilon$ and $\delta$
small enough will lead us to a contradiction. In particular, there
exists a fixed postie constant $\eta>0$, independent of $N$ and
$k$, such that choosing 
\[
\epsilon=\delta^{\frac{3}{5}}=\eta\exp\left(-c_{1}\left(\frac{\alpha}{k}\right)^{-q_{2k}}\left(\log\frac{1}{\alpha}\right)^{5}\right),
\]
makes inequality (\ref{eq:final proof inequality 1}) impossible.
These values of $\epsilon,\delta$ satisfy the necessary constraint
$C_{4}\left|\log\epsilon\right|\delta^{-4}\leq\frac{1}{2}\log N$
as long as 
\begin{equation}
\exp\left(c_{2}\left(\frac{\alpha}{k}\right)^{-q_{2k}}\left(\log\frac{1}{\alpha}\right)^{5}\right)\ll\frac{1}{2}\log N\label{eq:final proof inequality 2}
\end{equation}
for some new constant $c_{2}$, where the $\ll$ has consumed $\eta$.
When 
\[
\alpha\geq k\left(2c_{2}\right)^{\frac{1}{q_{2k}}}\frac{\left(\log\log\log N\right)^{\frac{5}{q_{2k}}}}{\left(\log\log N\right)^{\frac{1}{q_{2k}}}},
\]
we have that 
\[
\exp\left(c_{2}\left(\frac{\alpha}{k}\right)^{-q_{2k}}\left(\log\frac{1}{\alpha}\right)^{5}\right)\leq\sqrt{\log N},
\]
and so inequality (\ref{eq:final proof inequality 2}) holds for sufficiently
large $N$. Letting $2k=\left[\log\log\log N\right]$, which satisfies
the necessary bound $k\leq\frac{1}{2}\left(\log z\right)^{\frac{1}{3}}$,
we have that 
\[
\left(\frac{\log\log N}{\log\log\log N}\right)^{\frac{1}{2k}}\sim e,
\]
and so 
\[
k\left(2c_{2}\right)^{\frac{1}{q_{2k}}}\frac{\left(\log\log\log N\right)^{\frac{5}{q_{2k}}}}{\left(\log\log N\right)^{\frac{1}{q_{2k}}}}\geq C\frac{\left(\log\log\log N\right)^{6}}{\log\log N},
\]
for some constant $C$. This means that we will have a contradiction
when $N$ is sufficiently large and the density $\alpha$ satisfies
\[
\alpha\geq C\frac{\left(\log\log\log N\right)^{6}}{\log\log N}
\]
for some absolute constant $C$, which proves the desired result.\end{proof}
\begin{rem}
The proof suggests that we might need the condition $N\geq N_{0}$
for some fixed constant $N_{0}$ in the main theorem. Note however
that this is not necessary given how the result is phrased, as the
constant in the $\ll$ will be so large that it accounts for this.
\end{rem}

\specialsection*{Acknowledgments}

I am very grateful to Julia Wolf for her support, encouragement, and
generous help. I am especially thankful for her patience during our
conversations on the Selberg sieve. I would like to thank Greg Martin
for his helpful comments, and Daniel Fiorilli for leading me to the
paper of Klimov, as well as the anonymous referee for his many useful
suggestions.

\bibliographystyle{plain}

\end{document}